\documentclass[11pt]{article}

\RequirePackage{amsthm,amsmath,amsfonts,amssymb}
\RequirePackage[authoryear]{natbib}
\theoremstyle{plain}
\newtheorem{thm}{Theorem}
\newtheorem{lemma}{Lemma}[section]

\newtheorem{prop}[lemma]{Proposition}

\usepackage[margin=1.25in]{geometry}
\usepackage{setspace}
\usepackage{titlesec}
\usepackage{authblk}
\usepackage{abstract}

\setstretch{1.08}
\setlength{\parindent}{15pt}
\setlength{\parskip}{0pt}

\titleformat{\section}
{\normalfont\scshape\center}
{\thesection.}{0.5em}{}

\titleformat{\subsection}
{\normalfont\itshape}
{\thesubsection.}{0.5em}{}

\setlength{\absleftindent}{0.5in}
\setlength{\absrightindent}{0.5in}

\begin{document}

\renewcommand\Authfont{\scshape}
\renewcommand\Affilfont{\normalfont\small}
\setlength{\affilsep}{1em}

\title{Monotonicity of critical point in two-dimensional oriented percolation with enhancement}

\author{Célio Terra\\
	Universidade Federal do Rio de Janeiro\\
	\texttt{caugusto.terra@gmail.com}
}

\date{}

\maketitle
	
	\begin{abstract}
		In this note, we investigate Bernoulli oriented bond percolation with parameter $p$ on $\mathbb{Z}^2$. In addition to the standard edges, which are open with probability $p$, we introduce diagonal edges each open with probability $\varepsilon$. Every edge is open or closed independently of all other edges. We prove that the critical parameter for this model is strictly decreasing in $\varepsilon$. 
	\end{abstract}
	
\bigskip
\noindent
\textbf{Keywords:} oriented percolation; enhancements
\bigskip

	\section{Introduction} \label{sec:introduction}

	One of the most studied aspects of percolation theory is the concept of \emph{enhancements}. Informally, an enhancement refers to a rule that introduces additional open edges to the system. A natural question arises: does the addition of those extra edges facilitate the existence of an infinite open cluster? For non-oriented percolation, it is well-established that a broad class of enhancements reduces the value of the critical point (see, for example,~\cite{AizenmanGrimmett1991} and~\cite{Grimmett1999}). For oriented bond percolation, the usual enhancement techniques are not directly applicable. Some results about monotonicity of the critical points for oriented percolation in the three-dimensional hexagonal lattice have been found~(\cite{deLimaUngarettiVares2024}). Dynamic enhancements, where the rule depends on the configuration, were examined in~\cite{AndjelRolla23} for the contact process, which is the continuous-time analogue of oriented percolation. In that context, the monotonicity of the critical point was also proven.
	
	In this note, we investigate a static enhancement in an oriented percolation model. Specifically, we consider the classic oriented Bernoulli bond percolation on $\mathbb{Z}^2$ with parameter $p$, augmented with additional edges  that are open with probability $\varepsilon$ independently of each other and of all other edges. We establish that the critical value of this model strictly decreases as $\varepsilon$ increases.
	
	As usual when dealing with oriented percolation in $\mathbb{Z}^2$, we will consider the model in a rotated version of $\mathbb{Z}^2$. Let 
	\[\Lambda:=\{(m,n)\in \mathbb{Z}^2; n \ge 0, m+n \text{ is even}\}.\] 
	We consider two sets of oriented bonds,
	\[\mathcal{B}:=\{(m,n)\rightarrow (m+1,n+1),(m,n) \rightarrow (m-1,n+1):(m,n) \in \Lambda\}\]
	and
	\[\mathcal{B}_e:=\{(m,n)\rightarrow (m,n+2):(m,n) \in \Lambda\} \cup \mathcal{B}.\]
	
	Consider two oriented graphs, $G := (\Lambda, \mathcal{B})$ and $G_e := (\Lambda, \mathcal{B}_e)$. Each bond in $\mathcal{B}$ is open with probability $p$ and closed with probability $1 - p$, while each bond in $\mathcal{B}_e \setminus \mathcal{B}$ is open with probability $\varepsilon$ and closed with probability $1 - \varepsilon$. All bonds are open or closed independently of the state of all other bonds.
	
	Let $\mathbb{P}_{p,\varepsilon}$ denote the law of the oriented bond percolation model on $G_e$, defined in terms of the above occupation variables. The notation $\mathbb{P}_p := \mathbb{P}_{p,0}$ represents the usual Bernoulli-oriented bond percolation on $G$.
	
	A coupling is introduced to construct oriented percolation processes with different parameter values $p$ and $\varepsilon$ on the same probability space. To each bond $(m,n) \rightarrow (m',n')$, associate an independent random variable $Y_{(m,n)}^{(m',n')}$ with a uniform distribution on $[0,1]$. For $\lambda \in [0,1]$, the bond $(m,n) \rightarrow (m',n')$ is defined as \emph{$\lambda$-open} if $Y_{(m,n)}^{(m',n')} \le \lambda$ and \emph{$\lambda$-closed} if $Y_{(m,n)}^{(m',n')} > \lambda$.

	A \emph{$(p,\varepsilon)$-open path} is an oriented path on $G_e$ consisting only of edges on $\mathcal{B}$ that are $p$-open and edges of $\mathcal{B}_e \setminus \mathcal{B}$ that are $\varepsilon$-open. Given $A,B \subseteq \Lambda$, we say that $A \xrightarrow{p, \varepsilon} B$ if there is a $(p,\varepsilon)$-open path connecting $A$ to $B$. We define $\{(x,n)\xrightarrow{p, \varepsilon} \infty\}:=\cap_{m>n}\{(x,n) \xrightarrow{p, \varepsilon} \Lambda \cap (\mathbb{Z} \times \{m\})\}$.
	
	The \emph{oriented percolation process with parameters $(p,\varepsilon)$ and initial condition $A \subset 2 \mathbb{Z}$}, denoted by $(N_n^A)_{n \ge 0}$ is defined as
	\[N_n ^A:= \{m \in  \mathbb{Z} : A \times \{0\} \xrightarrow{p, \varepsilon} (m,n)\}.\]
	
	If $A=2\mathbb{Z}^-:=\{m \in 2\mathbb{Z}, m \le 0\}$, we write $N^-_n$ instead of $N^A_n$. 
	
	We will also see $N^A_n$ as a random subset of $\mathbb{Z}$. Let $\mathcal{P}(\mathbb{Z})$ be the powerset of $\mathbb{Z}$ with its usual Borel $\sigma$-algebra and partial order given by inclusion. Given $A$ and $B$ two random subsets of $ \mathbb{Z}$, we say that $A$ is \emph{stochastically dominated} by $B$, denoting by $A \preccurlyeq B$, if $\mathbb{E}[f(A)] \le \mathbb{E}[f(B)]$ for every increasing function $f$.
	
	Let
	\[\theta(p, \varepsilon)=\mathbb{P}_{p,\varepsilon}((0,0)\xrightarrow{p,\varepsilon} \infty).\]
	For $\varepsilon \in [0,1]$, the \emph{critical parameter} $p_c(\varepsilon)$ is defined as
	\[p_c(\varepsilon):=\inf\{p: \theta(p,\varepsilon)>0\}.\] 
	We can ask ourselves if the extra ``help'' given by the open bonds on $\mathcal{B}_e \setminus \mathcal{B}$ lowers the critical parameter of the system. The answer to this question is yes, as shown in our main result.
	
	\begin{thm} \label{thm:monotonicityepsilon}
		For every $0 \le \varepsilon< \tilde{\varepsilon}\le 1$, $p_c(\tilde{\varepsilon})<p_c(\varepsilon)$. In particular, $p_c(\varepsilon) < p_c(0)$ for all $\varepsilon \in (0,1]$.
	\end{thm}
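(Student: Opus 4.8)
The plan is to adapt the Aizenman--Grimmett differential-inequality strategy for strict inequalities to the oriented enhanced model. I would first reduce the theorem to a \emph{substitution estimate}: for each $0\le\varepsilon<\tilde\varepsilon\le1$ there is $\delta=\delta(\varepsilon,\tilde\varepsilon)\in(0,p_c(\varepsilon))$ with
\[
\theta(p+\delta,\varepsilon)>0\ \Longrightarrow\ \theta(p,\tilde\varepsilon)>0\qquad\text{for all }0\le p\le1-\delta.
\]
Granting this, for any $p_0>p_c(\varepsilon)$ one has $\theta(p_0,\varepsilon)>0$ (by the definition of $p_c(\varepsilon)$ and monotonicity of $\theta$ in $p$), hence $\theta(p_0-\delta,\tilde\varepsilon)>0$ and $p_c(\tilde\varepsilon)\le p_0-\delta$; letting $p_0\downarrow p_c(\varepsilon)$ gives $p_c(\tilde\varepsilon)\le p_c(\varepsilon)-\delta<p_c(\varepsilon)$, and the ``in particular'' statement is the case $\varepsilon=0$. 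I also record that $p_c(\varepsilon)\in(0,1)$, which is classical for two-dimensional oriented percolation; thus the relevant values of $p$ stay in a compact subinterval of $(0,1)$, a point that will matter below.

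For the substitution estimate I would work with the level-$N$ probabilities $\theta_N(p,\varepsilon):=\mathbb{P}_{p,\varepsilon}\big((0,0)\xrightarrow{p,\varepsilon}\Lambda\cap(\mathbb{Z}\times\{N\})\big)$, which are polynomials in $(p,\varepsilon)$ and decrease to $\theta(p,\varepsilon)$. Russo's formula gives $\partial_p\theta_N=\sum_{b\in\mathcal{B}}\mathbb{P}_{p,\varepsilon}(b\text{ pivotal for }E_N)$ and $\partial_\varepsilon\theta_N=\sum_{f\in\mathcal{B}_e\setminus\mathcal{B}}\mathbb{P}_{p,\varepsilon}(f\text{ pivotal for }E_N)$, with $E_N=\{(0,0)\to\text{level }N\}$ increasing. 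The aim is a bound $\partial_\varepsilon\theta_N\ge c(p)\,\partial_p\theta_N$, uniform in $N$, with $c(p)>0$ bounded below on compacts of $(0,1)$; integrating this transport inequality along its characteristics and letting $N\to\infty$ (iterating in $\varepsilon$ if needed to keep the parameters admissible) yields the substitution estimate with $\delta$ proportional to $\tilde\varepsilon-\varepsilon$. To obtain the transport inequality I would assign to each diagonal bond $b$ a nearby vertical bond $f(b)$, the assignment being at most $2$-to-$1$, and prove $\mathbb{P}_{p,\varepsilon}(f(b)\text{ pivotal})\ge c(p)\,\mathbb{P}_{p,\varepsilon}(b\text{ pivotal})$ by a bounded-range surgery.

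That surgery is the step I expect to be hardest, and it is where the oriented structure bites. In unoriented percolation one simply re-routes the connection through the enhancement bond inside a fixed box; here the vertical bond $f=(m,n)\to(m,n+2)$ lands one level above the reach of a diagonal detour out of $(m,n)$, and backtracking is forbidden. The favourable case is painless: if $b=(m,n)\to(m+1,n+1)$ is pivotal for $E_N$ and the open connection from $(0,0)$ to $(m,n)$ in the configuration $\omega$ with $b$ closed arrives at $(m,n)$ along the bond $(m+1,n-1)\to(m,n)$, then for $f=(m+1,n-1)\to(m+1,n+1)$ one checks at once that closing $b$ and opening $f$ makes $f$ pivotal --- only two bonds are altered, so $\mathbb{P}(f\text{ pivotal})\ge(1-p)\,\mathbb{P}(b\text{ pivotal and favourable})$. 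The difficulty is the complementary case, when the incoming connection reaches $(m,n)$ from the ``wrong'' side: one must then either redirect it within a bounded neighbourhood before it reaches $(m,n)$, or show that, conditionally on $b$ being pivotal, the favourable local geometry has probability bounded below. Carrying out one of these with a uniformly bounded surgery range --- exploiting the orientation rather than being obstructed by it --- is the crux, and is precisely what makes the usual enhancement argument non-transferable.

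As a fallback I would attempt a renormalization argument: $\theta(p+\delta,\varepsilon)>0$ produces good blocks of probability near $1$ whose occurrence throughout a renormalized cone forces percolation, and one would try to keep those blocks good for $(p,\tilde\varepsilon)$. This meets the same obstacle in disguise: the vertical enhancement is parallel to the straight direction and is useless for the diagonal crossings of a block, so the block events would have to be designed so that a vertical bond can replace, along a bounded detour, the $\delta$ subtracted from the diagonal bonds --- with the required uniformity in $p$ again coming only from $p$ being bounded away from $0$ and $1$.
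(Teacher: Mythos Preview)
Your plan follows the Aizenman--Grimmett template, and you have correctly located the obstruction: the local surgery step does not close. When a diagonal bond $b=(m,n)\to(m+1,n+1)$ is pivotal and the incoming open path reaches $(m,n)$ from $(m-1,n-1)$, no bounded-range modification forces a nearby vertical bond to be pivotal. The difficulty is structural: pivotality of a vertical bond $(x,k)\to(x,k+2)$ requires the \emph{absence} of open diagonal routes from $(x,k)$ to level $k+2$, whereas pivotality of $b$ only tells you about connections passing through $(m,n)$ and says nothing about alternative routes near any candidate $f(b)$. A finite surgery controls the probability cost of flipping finitely many bonds, but it does not manufacture the new negative information needed for $f(b)$ to be pivotal. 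The paper's introduction already notes that the usual enhancement techniques are not directly applicable to oriented percolation, and your proposal does not supply the missing mechanism; the renormalization fallback meets, as you say, the same obstacle.

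The paper circumvents this entirely by abandoning the pivotality comparison. It works instead with the speed $\alpha(p,\varepsilon)$ of the right edge of the process started from $2\mathbb{Z}^-$, using that $\alpha(p_c(\varepsilon),\varepsilon)=0$. The key step is dynamical rather than static: one waits for a stopping time $\tau_1$ at which the right edge of the $(p_c(\varepsilon),\varepsilon)$-process has just moved left twice and the vertical bond at the turning point is $\tilde\varepsilon$-open but $\varepsilon$-closed; at that instant the $(p_c(\varepsilon),\tilde\varepsilon)$-process contains an extra site two units to the right. A stochastic-domination lemma for the process seen from its right edge (Lemma~\ref{lemma:domination}) together with a coupling lemma (Lemma~\ref{lemma:coupling}) lets this $+2$ gain persist for all later times. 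Iterating at successive stopping times $\tau_k$ gives $r(\tilde N^-_n)\ge r(N^-_n)+2k$ for $n\ge\tau_k$; since $\tau_k/k$ stays bounded while $r(N^-_n)/n\to 0$, one concludes $\alpha(p_c(\varepsilon),\tilde\varepsilon)>0$ and hence $p_c(\tilde\varepsilon)<p_c(\varepsilon)$. This argument, adapted from Andjel--Rolla, exploits the one-dimensional ordered structure of the right edge and never needs a local pivotal-to-pivotal surgery.
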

	
	Theorem~\ref{thm:monotonicityepsilon} will be proved on Section~\ref{section:monotonicitycriticality}. Building on arguments similar to those in~\cite{AndjelRolla23}, we show that the presence of additional open edges contributes more to the cluster's growth than simply translating the cluster one unit to the right. This leads to an increase in the speed of the cluster's right edge, thereby directly establishing both theorems.

	\section{Speed of the right edge} \label{section:rightedge}
	
	In this section, we will show that the right edge of the percolation cluster has a well defined speed, and show that this speed is zero at the critical point.
	
	From the definitions of the Section~\ref{sec:introduction} it follows that the oriented percolation process with parameters $(p,\varepsilon)$ is \emph{attractive}. This means that, for all $A,B \subseteq 2 \mathbb{Z}$,
	\[N^{A \cup B}_n \subseteq N^A_n \cup N_n^B \text{ for all }n \ge 0.\]
	
	Define, for every non-empty	 $A \subseteq \mathbb{Z}$, $r(A):= \sup A$.
	\begin{prop} \label{prop:existencevelocity} For all $p, \varepsilon$ in $[0,1]$ there is $ \alpha=\alpha(p,\varepsilon)$ such that
		\[\lim_n \frac{r(N^-_n)}{n}=\alpha\]
		almost surely.
	\end{prop}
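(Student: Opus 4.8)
The plan is to use Kingman's subadditive (here, superadditive) ergodic theorem. First I would observe that $r(N^-_n)$ is finite almost surely: although the initial condition $2\mathbb{Z}^-$ is infinite, the edges out of each vertex reach only finitely far, so by attractivity $r(N^-_n) \le r(N^{\{0\}}_n)$, and $r(N^{\{0\}}_n)$ is bounded by $n$ plus a random term coming from how far a $(p,\varepsilon)$-open path can travel; in fact, since every bond increases the time-coordinate, any vertex in $N^{\{0\}}_n$ has space-coordinate at most $n$ in absolute value, so $r(N^-_n) \le n$ deterministically. The key structural fact is that $2\mathbb{Z}^-$ is invariant under translation by $(-2,0)$ (more precisely, shifting the half-line to the left only enlarges it), which lets one set up a superadditive scheme.

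The main step is to define $X_{m,n}$ for $m \le n$ as the position of the right edge at time $n$ of the process started from the half-line $2\mathbb{Z}^- + (r(N^-_m), m)$ — i.e. restart the dynamics at time $m$ from the half-line whose right endpoint is the current right edge. By the graphical/coupling construction, the future of the right-edge process depends only on the bonds with time-coordinates above $m$, and because the process is attractive and started from a set that dominates the ``true'' configuration $N^-_m$ restricted to its negative part... — actually the cleaner route is: let $\xi_n$ be the process started from all of $2\mathbb{Z}^-$, and let $r_n := r(\xi_n)$. Then I claim $r_{n} \ge r_m + \tilde r_{n-m}$ where $\tilde r$ is an independent copy of the right-edge process driven by the translated bonds above level $m$ started from $2\mathbb{Z}^-$ translated so its endpoint sits at $r_m$; this holds because $\xi_m \supseteq (2\mathbb{Z}^- + r_m)\cap(\text{stuff})$ is false in general, so instead one uses that $\xi_m \ni r_m$ and monotonicity in the initial condition gives $\xi_n \supseteq$ the process from $\{r_m\}$, hence $r_n \ge$ the right edge of the process from $\{r_m\}$ driven by shifted bonds. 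The translation-invariance of $\mathbb{P}_{p,\varepsilon}$ under $(m,n)\mapsto(m+2,n)$ and under time-shifts (for the half-line started process, using the full half-line rather than a point to get exact stationarity) gives that the array $(X_{m,n})$ is superadditive, stationary, and ergodic (ergodicity from the mixing/independence of disjoint time-slabs), with $\mathbb{E}[X_{0,1}] \le 1 < \infty$. Kingman's theorem then yields $r(N^-_n)/n \to \alpha$ a.s.\ and in $L^1$, with $\alpha = \sup_n \mathbb{E}[r(N^-_n)]/n = \lim_n \mathbb{E}[r(N^-_n)]/n$.

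The subtle point — and the main obstacle — is getting genuine \emph{stationarity} of the increments rather than mere superadditivity of expectations: this is exactly why one works with the half-line initial condition $2\mathbb{Z}^-$ instead of a single point, since $2\mathbb{Z}^-$ is (up to the relevant shift) invariant, so that the process $(r(N^-_{m+n}) - r(N^-_m))_{n\ge0}$ started from the configuration at time $m$ has a law that, after the spatial translation by $-r(N^-_m)$, can be compared with a fresh copy — one direction of domination (superadditivity) is what Kingman needs. I would also need to note measurability of $r(N^-_n)$ and integrability, both of which are immediate from $0 \le r(N^-_n) \le n$ once one checks $N^-_n \neq \emptyset$ a.s., which holds because $0 \in N^-_0$ and the origin has at least a chance to survive — actually $r(N^-_n)\ge -n$ trivially and $N^-_n \ne \emptyset$ since $2\mathbb{Z}^-$ is infinite and paths going straight left-down keep many vertices alive with positive probability; more carefully, $N^-_n \supseteq N^{\{-2n\}}_n$-type arguments show it is a.s.\ nonempty, so $r(N^-_n)$ is well-defined. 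Finally I would remark that the same argument applied to $-\inf N^-_n$ (the left edge of the process started from $2\mathbb{Z}^+$, say, or by symmetry) is not needed here, but the identification $\alpha = \lim \mathbb{E}[r(N^-_n)]/n$ will be used in the next section to show $\alpha = 0$ at criticality.
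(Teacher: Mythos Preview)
Your approach is essentially the same as the paper's: the paper simply invokes attractiveness and Kingman's subadditive ergodic theorem, citing Liggett's book (Theorem~VI.2.19) for the details. Your write-up expands on exactly those details---the superadditivity of $r(N^-_n)$ coming from the half-line initial condition together with attractiveness, and the integrability and stationarity needed for Kingman---so the strategies coincide; just be aware that the degenerate case $p=\varepsilon=0$ (where $N^-_n=\emptyset$ for $n\ge 1$) falls outside your nonemptiness argument and would need a separate trivial remark.
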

	\begin{proof}
		It follows from attractiveness and Kingman's subadditive ergodic theorem, as shown in~\cite[Theorem VI.2.19]{Liggett05}.
	\end{proof}
	
	As the right edge can increase by $1$ at each unit of time, $\alpha(p, \varepsilon) \le 1$ a.s. A key property of $\alpha$ is that it is zero at criticality.
	\begin{prop} \label{prop:zerospeed}
		For all $\varepsilon\in [0,1]$,
		\[\alpha(p_c(\varepsilon), \varepsilon)=0.\]
	\end{prop}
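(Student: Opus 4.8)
The statement is that the right-edge speed vanishes at criticality: $\alpha(p_c(\varepsilon), \varepsilon) = 0$. Let me think about the two inequalities.

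First, $\alpha(p_c(\varepsilon), \varepsilon) \leq 0$. This should follow because if $\alpha(p,\varepsilon) > 0$, then the process survives — in fact, percolation occurs. The standard argument: if the right edge has positive speed starting from the half-line $2\mathbb{Z}^-$, then $r(N^-_n) \to \infty$, so in particular $N^-_n \neq \emptyset$ for all $n$, i.e., the process survives from $2\mathbb{Z}^-$. By attractiveness and the structure of the model (the process started from a single point, compared via the half-line), survival from the half-line with positive probability... actually one needs a bit more: positive speed should imply $\theta(p,\varepsilon) > 0$. One route is a block/renormalization argument (à la Durrett), but more simply: positive speed of the edge from the half-line, combined with the fact that $N^-_n$ restricted near its right edge "looks like" the process from a point, gives survival. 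For the clean version, I would invoke: $\{r(N^-_n)/n \to \alpha > 0\}$ implies the event $\{N^-_n \neq \emptyset \,\forall n\}$ has probability one; and by a standard coupling/ergodicity argument (e.g. the "edge process" and restart arguments in Durrett's survey, or Liggett), $\theta(p,\varepsilon) > 0$. Hence $p > p_c(\varepsilon)$. Contrapositive: at $p = p_c(\varepsilon)$, $\alpha \leq 0$.

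Second, $\alpha(p_c(\varepsilon), \varepsilon) \geq 0$. Here I would use continuity/monotonicity. The map $p \mapsto \alpha(p,\varepsilon)$ is nondecreasing (by the coupling: more open edges can only push the right edge further right) and, crucially, it is continuous — or at least left-continuous — which follows from the subadditive representation $\alpha = \lim_n \mathbb{E}[r(N^-_n)]/n = \inf_n \mathbb{E}[r(N^-_n)]/n$... wait, that's an infimum of continuous (polynomial in $p,\varepsilon$) functions, hence upper semicontinuous, giving right-continuity-type bounds, not what I want directly. Better: for $p < p_c(\varepsilon)$ we have $\theta(p,\varepsilon) = 0$, so the cluster of the origin is finite a.s.; one shows this forces $\alpha(p,\varepsilon) \leq 0$ as well (subcritically the right edge cannot have positive speed — again a restart/subadditivity argument, or: finite clusters a.s. plus translation invariance bounds $r(N^-_n)$). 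Actually $\alpha$ could a priori be negative subcritically. To get $\alpha(p_c(\varepsilon),\varepsilon) \geq 0$, the cleanest is: $\alpha(p,\varepsilon) \geq 0$ for ALL $p \geq p_c(\varepsilon)$ and then take a limit $p \downarrow p_c(\varepsilon)$ using some semicontinuity — but I expect the paper instead shows directly that $\alpha \geq 0$ holds at $p_c$ via a self-duality or symmetry argument, or simply notes $\alpha(p_c(\varepsilon),\varepsilon) \geq \alpha(p_c(\varepsilon), 0) \geq$ something. Hmm — for $\varepsilon = 0$ this is the classical fact (Durrett), $\alpha(p_c(0),0) = 0$.

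So my overall plan: (i) show $\alpha > 0 \Rightarrow \theta > 0 \Rightarrow p > p_c$, giving $\alpha(p_c,\varepsilon) \leq 0$; (ii) show $\alpha < 0$ would, by symmetry (the analogous left-edge $\ell(N^+_n)$ where $N^+$ starts from $2\mathbb{Z}^+$, plus the fact that at $p_c$ subcritically-or-critically the cluster behavior is symmetric) or by a direct argument that $\theta(p,\varepsilon) = 0$ does not prevent $\alpha \geq 0$, force a contradiction; more precisely I'd argue $\alpha(p,\varepsilon)$ is nondecreasing in $p$ and $\alpha(p,\varepsilon) \geq 0$ for $p > p_c(\varepsilon)$ because there the process survives and an infinite cluster from the half-line drifts right at nonnegative speed, then pass to the limit. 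The main obstacle is the second inequality $\alpha(p_c(\varepsilon),\varepsilon) \geq 0$: it requires either a left-continuity property of $p \mapsto \alpha(p,\varepsilon)$ at the critical point, or a symmetry argument relating the right edge of $N^-$ to (minus) the left edge of the reflected process, and handling the enhancement edges (which are left-right symmetric, so symmetry should be preserved). I would carry out (i) first via the renormalization/restart argument for survival, then establish (ii) by showing $\alpha(p,\varepsilon) \geq 0$ for $p > p_c(\varepsilon)$ together with $\alpha(\cdot,\varepsilon)$ being continuous from the right, hence $\alpha(p_c(\varepsilon),\varepsilon) = \lim_{p \downarrow p_c(\varepsilon)} \alpha(p,\varepsilon) \geq 0$ — wait, I need left-continuity for the $\leq 0$ part to combine, but I already have $\leq 0$ at $p_c$ directly, so right-continuity plus $\alpha(p,\varepsilon) \geq 0$ for $p > p_c$ suffices to conclude $\alpha(p_c(\varepsilon),\varepsilon) = 0$. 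The delicate point is thus proving $\alpha(p,\varepsilon) \geq 0$ throughout the supercritical phase, for which I'd use that on survival the right edge regenerates and, by subadditive ergodic theory applied to the surviving process, cannot drift to $-\infty$.
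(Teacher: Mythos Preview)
Your plan has a genuine gap in part (i). You argue that $\alpha(p,\varepsilon)>0$ implies $\theta(p,\varepsilon)>0$ and then write ``Hence $p>p_c(\varepsilon)$''. But $\theta(p,\varepsilon)>0$ only yields $p\ge p_c(\varepsilon)$ from the definition of $p_c$ as an infimum; the strict inequality would require the separate fact $\theta(p_c(\varepsilon),\varepsilon)=0$, which you have not established for the enhanced model and cannot simply quote. Without strictness the contrapositive does not give $\alpha(p_c(\varepsilon),\varepsilon)\le 0$, and your upper--semicontinuity observation (which for a nondecreasing function yields right-continuity) does not help on this side either.

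The paper closes exactly this gap by a mechanism you mention but then set aside. It runs the Durrett block construction: from $\alpha>0$ it builds crossing events $E_{m,n}$ in parallelograms of scale $L$ so that $\mathbb{P}_{p,\varepsilon}(E_{m,n})>1-\delta$ for large $L$, and compares with $1$-dependent oriented site percolation. The decisive extra step is that each $E_{m,n}$ depends on only finitely many bonds, so $p\mapsto \mathbb{P}_{p,\varepsilon}(E_{m,n})$ is continuous; one may therefore lower $p$ to some $p'<p$ while keeping $\mathbb{P}_{p',\varepsilon}(E_{m,n})>1-\delta$, and hence obtain percolation at $(p',\varepsilon)$. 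This gives $p_c(\varepsilon)\le p'<p$ directly, with no appeal to $\theta(p_c)=0$. The case $\alpha<0$ is declared analogous (using the left--right symmetry of the enhancement). Your route to $\alpha(p_c)\ge 0$ via ``$\alpha\ge 0$ throughout the supercritical phase'' plus right-continuity is a reasonable alternative for that half, but it does not repair the missing strict inequality in (i); the perturbation trick inside the block argument is the key idea you are missing.
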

	\begin{proof}
		The proof uses the arguments of~\cite[Section 9]{Dur84}. Fix $p, \varepsilon \in [0,1]$ and suppose $\alpha:=\alpha(p,\varepsilon)> 0$, the case $\alpha<0$ is analogous. Given $L \in \mathbb{N}$, define $D$ to be the parallelogram with vertices $(-0.15 \alpha L,0)$, $(-0.05 \alpha L, 0)$, $(0.95 \alpha L,1.1  L)$ and $(1.05\alpha L, 1.1  L)$, and define the event
		\begin{align*}
			E:=\left\{\begin{alignedat}{4}
				& \text{There is a $(p,\varepsilon)$-open path from }[-0.15 \alpha L, -0.05 \alpha L]\times\{0\} \\
				& \text{to }[0.95 \alpha L, 1.05 \alpha L]\times \{1.1 L\}  \text{ that do not leave } D \\
				&\text{and a $(p,\varepsilon)$-open path from }[0.05 \alpha L, 0.15 \alpha L] \times \{0\} \\
				& \text{ to }[-1.05 \alpha L, -0.95 \alpha L] \times \{1.1 L\} \text{ that do not leave }-D
			\end{alignedat}	
			\right\}.
		\end{align*}
		As $r(N^-_n)/n \rightarrow \alpha$, almost surely, $\mathbb{P}_{p, \varepsilon}(E)$ can be made arbitrarily close to  $1$ if we take $L$ large enough. This fact follows from straightforward arguments (see~\cite[Section  9]{Dur84}).
		
		For $(m,n) \in \Lambda$, define $E_{m,n}$ to be the translation of event $E$ by the vector $(0.9 \alpha Lm, Ln)$. Note that $E_{m,n}$ is independent of $E_{m',n'}$ if $(|m-m'|+|n-n'|)/2>1$. Furthermore, by translation invariance, $\mathbb{P}_{p,\varepsilon}(E_{m,n})=\mathbb{P}_{p,\varepsilon}(E)$ for every $(m,n)\in \Lambda$. 
		
		We define a collection $\mathcal{P}:=\{\eta(m,n)\}_{(m,n) \in \Lambda}$ of Bernoulli random variables such that $\eta(m,n)=1$ if $E_{m,n}$ occurs and $\eta(m,n)=0$ otherwise. We see that $\mathcal{P}$ forms a $1$-dependent site percolation model such that, if there is an infinite open cluster in $\mathcal{P}$, then there is an infinite path in the original oriented bond percolation with enhancement system. By~\cite{Liggett1997}, there is $\delta>0$ such that, if $\mathbb{P}(\eta(m,n)=1)>1-\delta$, then there is a positive probability of existing an infinite open cluster in $\mathcal{P}$. Take $L$ large enough such that $\mathbb{P}_{p,\varepsilon}(E_{m,n})>1-\delta$. Since $E_{m,n}$ depends on a finite collection of edges, we can take $p'<p$ such that, for the same $L$, we still have $\mathbb{P}_{p',\varepsilon}(E_{m,n})>1-\delta$, and thus, there will be a positive probability of existence of an infinite $(p', \varepsilon)$-open path in the oriented bond percolation with enhancement system. Thus, if $\alpha(p, \varepsilon)>0$, then $p>p_c(\varepsilon)$.
	\end{proof}

	\section{Monotonicity of critical points} \label{section:monotonicitycriticality}
	
	In this Section we prove Theorem~\ref{thm:monotonicityepsilon}. To do this, we make use of two lemmas. The first one states that adding a site to the right edge of the percolation cluster ``helps'' the process as seen from the right edge. The second one tells that this extra help can be done in such a way that it is preserved by the evolution of the process.
	
	Before stating the lemmas, we will need a definition. For $A \subset 2 \mathbb{Z}$ with $\sup A < +\infty$, we define $F(A)=\{x-r(A) : x \in A\}$. 
	\begin{lemma} \label{lemma:domination}
		For every $p$, $\varepsilon \in [0,1]$ and $n \ge 0$,
		\begin{equation} \label{eq:domination}
			F(N^{-}_n)\preccurlyeq F(N^{-}_n \cup \{r(N^{-}_n)+2\}).
		\end{equation}
	\end{lemma}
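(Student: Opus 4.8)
The plan is to rewrite the right‑hand side. Since $\sup\big(N^-_n\cup\{r(N^-_n)+2\}\big)=r(N^-_n)+2$, we have $F\big(N^-_n\cup\{r(N^-_n)+2\}\big)=\big(F(N^-_n)-2\big)\cup\{0\}$; writing $\Phi(S):=(S-2)\cup\{0\}$ for sets $S\subseteq 2\mathbb{Z}^-$ with $0\in S$, inequality~\eqref{eq:domination} is therefore equivalent to $F(N^-_n)\preccurlyeq\Phi\big(F(N^-_n)\big)$, and by a standard coupling criterion (Strassen's theorem) it suffices to produce, on a common probability space, a set $X\overset{d}{=}F(N^-_n)$ and a copy $\widehat N\overset{d}{=}N^-_n$ with $X\subseteq\Phi\big(F(\widehat N)\big)$. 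This is not completely trivial: $\Phi$ does not dominate the identity, because if $N^-_n$ has a ``hole'' at $r(N^-_n)-2$ then $\Phi$ deletes an occupied site just behind the edge while inserting one further back, so neither reusing a single realisation nor shifting the whole graphical construction by two units works. Moreover the map $A\mapsto F(A)$ fails to be monotone for inclusion once the right edges of the two arguments differ (e.g. $\{0,-4\}\subseteq\{2,0,-4\}$ but $F$ sends them to $\{0,-4\}\not\subseteq\{0,-2,-6\}$), so the coupling must keep track of both the displacement of the right edge and the occupation profile behind it.

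I would prove~\eqref{eq:domination} by induction on $n$, following the strategy of~\cite{AndjelRolla23} and working with the process seen from its right edge. Because the enhancement bonds join level $n-1$ to level $n+1$, the pair $\big(N^-_{n-1}-r(N^-_n),\,N^-_n-r(N^-_n)\big)$ is the appropriate Markov state, so two consecutive slices must be carried along. The base case $n=0$ is immediate: $N^-_0=2\mathbb{Z}^-$, whence $F(N^-_0)=2\mathbb{Z}^-=\Phi(2\mathbb{Z}^-)$, and the two sides coincide. For the inductive step one starts from a coupling realising the time‑$n$ domination and applies one step of the dynamics to both configurations. Here additivity, $N^{A\cup B}_m=N^A_m\cup N^B_m$, is the main tool: the configuration obtained from $N^-_n\cup\{r(N^-_n)+2\}$ is $N^-_{n+1}$ together with the image of the single site $r(N^-_n)+2$ under the two standard bonds leaving it, i.e. with at most the sites $r(N^-_n)+1$ and $r(N^-_n)+3$. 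One then has to show that, after re‑centring, this dominates $F(N^-_{n+1})$ shifted by one lattice step; when it does not — the ``extra particle'' may have died or the right edge may have jumped to the left — the domination has to be restored by coupling the bonds freshly sampled between levels $n$ and $n+1$ so as to ``upgrade'' the occupation of the sites just behind the new right edge, in a way that leaves the law of $N^-_{n+1}$ unchanged.

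The hard part will be exactly this coupling at the inductive step, where three difficulties have to be reconciled at once: (i) the new right edge $r(N^-_{n+1})$ need not be $r(N^-_n)\pm1$, so the moving reference point has to be controlled and the non‑monotonicity of $F$ handled; (ii) a ``hole'' just behind the right edge, which $\Phi$ cannot tolerate, must be repaired using fresh bonds, and one must check that this re‑randomisation does not alter the distribution of the process; and (iii) the two‑level range of the enhancement bonds forces a companion of $N^-_{n-1}$ to be transported through the construction with the domination kept consistent on both slices simultaneously. Producing a single coupling with the correct marginals in which the extra help planted at the right edge survives a step of the dynamics — neither lost when the edge jumps nor when a hole appears — is the crux of the proof.
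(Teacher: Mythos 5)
Your reformulation of the right-hand side is correct: indeed
\[
F\bigl(N^-_n\cup\{r(N^-_n)+2\}\bigr)=\bigl(F(N^-_n)-2\bigr)\cup\{0\},
\]
and your base case $F(N^-_0)=2\mathbb{Z}^-=\bigl(2\mathbb{Z}^--2\bigr)\cup\{0\}$ is right. The difficulties you list (non-monotonicity of $F$, holes just behind the edge, the two-level reach of the enhancement bonds) are also genuine obstacles for an inductive proof. The problem is that you stop exactly there: the construction you would need at the inductive step --- a coupling with correct marginals that transports the extra right-edge particle through one step of the dynamics, repairs holes using freshly sampled bonds without biasing the law, handles jumps of the reference point $r(N^-_{n+1})$, and carries a companion slice at level $n-1$ --- is never produced. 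You describe it as ``the crux of the proof'' and leave it as an open list of requirements. As written, this is not a proof but a catalogue of the things a proof would need to do; each of the three difficulties you enumerate is a place where the argument could fail, and none is resolved.

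The paper argues quite differently and avoids induction altogether. Fix $n$ and condition on the rightmost open path $\Gamma=\gamma$ from $2\mathbb{Z}^-\times\{0\}$ to level $n$. The event $\{\Gamma=\gamma\}$ depends only on bonds lying on or to the right of $\gamma$, so the bonds strictly to the left of $\gamma$ remain i.i.d.\ Bernoulli under the conditional law. Hence, conditionally, $F(N^-_n)$ is the level-$n$ trace of an oriented percolation cluster grown in the region $E_{\tilde\gamma}$ strictly left of the recentred boundary $\tilde\gamma$ (with $\tilde\gamma_k=\gamma_k-\gamma_n$), while the right-hand side of~\eqref{eq:domination} corresponds to the same construction with boundary $\bar\gamma$, where $\bar\gamma_k=\tilde\gamma_k-2$ for $k<n$ and $\bar\gamma_n=\tilde\gamma_n=0$. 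Since $\bar\gamma$ lies weakly to the left of $\tilde\gamma$ and the two boundaries coincide at level $n$, planarity of the oriented lattice yields the domination in one shot. This spatial-Markov decoupling is precisely what lets one sidestep the issues you flag: there is no moving reference frame to chase, no need to repair holes step by step, and no two-slice bookkeeping, because the comparison is made once, at time $n$, between two deterministic boundaries. If you want to pursue the inductive route, you would have to actually exhibit the coupling at the inductive step; as it stands, your argument has a gap exactly where the real work lies.
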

	
	\begin{lemma} \label{lemma:coupling}
		If $F(A) \preccurlyeq F(B)$ then for any values of $p$ and $\varepsilon$ the processes $(N^A_n)_{n \ge 0}$ and $(N^B_n)_{n \ge 0}$ can be coupled in such a way that $r(N^A_n)-r(A) \le r(N_n^B)-r(B)$ almost surely for all $n$.
	\end{lemma}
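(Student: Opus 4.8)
The plan is to build an explicit coupling of the two processes on a common probability space using a single family of i.i.d. uniform variables, but with the key twist that the variables driving the $B$-process are a *shifted* copy of those driving the $A$-process. Concretely, assume without loss of generality $r(B) - r(A) = 2k$ for some integer $k \ge 0$ (both differences lie in $2\mathbb Z$, and if $r(B) < r(A)$ the roles are not symmetric, so one must check the sign; in fact $F(A) \preccurlyeq F(B)$ together with $0 \in F(A) \cap F(B)$ forces nothing about $r(A)$ vs $r(B)$, so the shift $k$ is just whatever integer makes the alignment below work). Let $\{Y_{(m,n)}^{(m',n')}\}$ be the i.i.d. uniforms of the introduction; run $(N^A_n)$ off these variables as usual. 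For the $B$-process, use instead the horizontally translated variables $\widetilde Y_{(m,n)}^{(m',n')} := Y_{(m - 2k,\, n)}^{(m'-2k,\,n')}$, so that the environment seen by the $B$-process near its right edge is, bond for bond, the environment seen by the $A$-process near its right edge. This guarantees $r(N^B_n) - r(B) \ge r(N^A_n) - r(A)$ whenever the ``relevant'' part of $N^B_0$ dominates that of $N^A_0$, which is exactly what $F(A) \preccurlyeq F(B)$ is designed to encode.

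The first real step is to reduce to the case where the domination $F(A) \preccurlyeq F(B)$ is realised by an *almost-sure inclusion*: by Strassen's theorem, since $F(A) \preccurlyeq F(B)$ as random subsets of $\mathbb Z$ (though here $A,B$ are deterministic, so this is just $F(A) \subseteq F(B)$, possibly after noting $\sup F(A) = \sup F(B) = 0$), we may assume $F(A) \subseteq F(B)$ pathwise. Then $A - r(A) \subseteq B - r(B)$. The second step is the monotonicity / attractiveness input: I claim that if $A' := A - r(A) + r(B) \subseteq B$ (which holds by the previous step), then running both $N^{A'}$ and $N^B$ off the *same* uniforms $\{\widetilde Y\}$ gives $N^{A'}_n \subseteq N^B_n$ for all $n$, hence $r(N^{A'}_n) \le r(N^B_n)$. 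This is the standard monotonicity of oriented percolation in the initial set, proved by induction on $n$: an open path to $(m,n)$ from $A'$ is an open path to $(m,n)$ from $B$. The third step is translation invariance of the driving field: because $\{\widetilde Y_{(\cdot)}^{(\cdot)}\} = \{Y_{(\cdot - 2k)}^{(\cdot - 2k)}\}$ has the same law as $\{Y\}$, and because $N^{A'}$ driven by $\widetilde Y$ is the horizontal translate by $2k$ of $N^{A - r(A) + r(A)} = N^A$ driven by $Y$ — wait, one must line up $A' = A - r(A) + r(B) = A + 2k$ — we get $N^{A'}_n = N^A_n + 2k$ in distribution as processes, in fact pathwise under the above identification, so $r(N^{A'}_n) = r(N^A_n) + 2k = r(N^A_n) + r(B) - r(A)$.

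Chaining the three steps: $r(N^A_n) + r(B) - r(A) = r(N^{A'}_n) \le r(N^B_n)$, i.e. $r(N^A_n) - r(A) \le r(N^B_n) - r(B)$ almost surely for all $n$, which is the assertion. The main obstacle is bookkeeping the alignment correctly: one must be careful that ``shift the environment by $r(B)-r(A)$'' genuinely converts the $A$-dynamics into the $B$-dynamics, using that the bond set $\mathcal B_e$ is invariant under even horizontal translations, and that the reduction via Strassen is applied to $F(A), F(B)$ as subsets *pinned at $0$ on the right*, so that after re-shifting, the inclusion $A + (r(B)-r(A)) \subseteq B$ is exactly $F(A) \subseteq F(B)$. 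A secondary point to handle with care is that $F(A) \preccurlyeq F(B)$ is stated for general random $A,B$, but in the application (Lemma~\ref{lemma:domination}) $A = N^-_n$ and $B = N^-_n \cup \{r(N^-_n)+2\}$ are genuinely random and correlated; so the coupling must be built *conditionally* on the pair $(F(A),F(B))$ using a regular conditional version of Strassen's coupling, then integrated — this is routine but should be mentioned so the reader sees the lemma applies in the dependent setting where it will be used.
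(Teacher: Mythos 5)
Your proposal is correct and takes essentially the same route as the paper: realize the stochastic domination $F(A) \preccurlyeq F(B)$ as an almost-sure inclusion (Strassen), evolve both processes off the same driving field so that attractiveness preserves the inclusion and hence the ordering of right edges, and undo the normalization by a horizontal shift. The paper packages the last step slightly more economically by constructing $A'\sim F(A)$, $B'\sim F(B)$ with $A'\subseteq B'$ (both already pinned at $0$ on the right) and evolving those directly, whereas you shift the environment by $2k=r(B)-r(A)$ and compare $N^{A+2k}$ with $N^B$; these are the same idea up to bookkeeping, and your worry about the sign of $k$ is unnecessary since nothing in the argument uses $k\ge 0$. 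One genuine improvement in your write-up is the explicit remark that in the intended application $A$ and $B$ are random and correlated, so the Strassen coupling must be applied conditionally on $(F(A),F(B))$ and then integrated; the paper leaves this implicit, and it is worth saying.
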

	
	Lemmas~\ref{lemma:domination} and~\ref{lemma:coupling} will be proved on Section~\ref{section:stochasticdomination}.
	
	\begin{proof}[Proof of Theorem~\ref{thm:monotonicityepsilon}.] Let $\tilde{\varepsilon}>\varepsilon\ge 0$, and $(N_n^{-})_{n\ge 0}$, $(\tilde{N}^-_n)_{n\ge 0}$ oriented percolation processes with parameters $(p_c(\varepsilon), \varepsilon)$ and $(p_c(\varepsilon), \tilde{\varepsilon})$, respectively, both with initial condition $2 \mathbb{Z}^-$. 
		
		Define the random time
		\begin{align*}
			\tau_1 := \min \{&n \ge 2 :  r(N^{-}_n)= r(N^{-}_{n-1})-1 = r(N^{-}_{n-2})-2; \\&	Y_{(r(N^{-}_{n-2}), n-2)}^{(r(N^{-}_{n-2}), n)} \in (\varepsilon, \tilde{\varepsilon}]\}.
		\end{align*}
		
		In other words, $\tau_1$ is the first time the right edge of the process ${(N^-_n)_{n \ge 0}}$ moves to the left two times in a row starting at some point $(x,n-2)$ and the vertical bond starting at that point is $\tilde{\varepsilon}$-open but $\varepsilon$ closed. It is clear that the $\tau_1$ is a finite stopping time for the natural filtration of the process $(N_n^-)_{n \ge 0}$. Moreover, the tail of $\tau_1$ has a geometric decay, because $\tau_1$ is stochastically dominated by a geometric random variable. 
		
		By the definition of $\tau_1$, 
		\[N^{-}_{\tau_1}\cup \{r(N^{-}_{\tau_1})+2\} \subseteq \tilde{N}^{-}_{\tau_1}.\]
		
		By Lemma~\ref{lemma:domination}, 
		\[F(N^{-}_{\tau_1}) \preccurlyeq F(N^{-}_{\tau_1}\cup \{r(N^{-}_{\tau_1})+2\})\preccurlyeq F(\tilde{N}^-_{\tau_1}).\] 
		Applying Lemma~\ref{lemma:coupling}, there is a coupling such that
		\[r(N^{-}_{n})+2 \le r(\tilde{N}^{-}_{n}) \text{ a.s.\ for every } n \ge \tau_1.\]

		We define $(N^1_n)_{n\ge \tau_1}$ and $(\tilde{N}^1_n)_{n\ge \tau_1}$ as oriented percolation processes with initial condition $\tilde{N}^{-}_{\tau_1}$ and parameters $(p_c(\varepsilon),\varepsilon)$ and $(p_c(\varepsilon), \tilde{\varepsilon})$ respectively. 
		
		We define inductively, for $k \ge 2$, the stopping time $\tau_k$ as
		\begin{align*}
			\tau_k := \min \{&n \ge \tau_{k-1}+2 :  r(N^{k-1}_n)= r(N^{k-1}_{n-1})-1 = r(N^{k-1}_{n-2})-2; \\&	Y_{(r(N^{k-1}_{n-2}), n-2)}^{(r(N^{k-1}_{n-2}), n)} \in (\varepsilon, \tilde{\varepsilon}]\}.
		\end{align*}
		\noindent We define the processes $(N^k_n)_{n\ge \tau_k}$ and $(\tilde{N}^k_n)_{n\ge \tau_k}$ as oriented percolation processes with parameters $(p_c(\varepsilon), \varepsilon)$ and $(p_c(\varepsilon), \tilde{\varepsilon})$ respectively, and initial condition $N^k_{\tau_k}=\tilde{N}^k_{\tau_k}=\tilde{N}^{k-1}_{\tau_k}$. Again by Lemmas~\ref{lemma:domination} and~\ref{lemma:coupling}, there is a coupling of $(N^k_n)_{n \ge \tau_k}$ and $(\tilde{N}^k_n)_{n \ge \tau_k}$ such that
		\[r(N^{k}_{n})+2 \le r(\tilde{N}^{k}_{n}) \text{ a.s.\ for every } n \ge \tau_{k+1}.\]
		
		Analogously, we can find a coupling such that
		\begin{equation} \label{eq:inequalityedge}
			r(N^{-}_{n})+2k \le r(\tilde{N}^{-}_{n}) \text{ a.s.\ for every } n \ge \tau_{k+1}.
		\end{equation}
		Furthermore, $\tau_{k+1}-\tau_{k}$ has the same distribution of $\tau_1$, and thus its tail has a geometric decay for all $k$. In particular, $\lim_{k} \frac{\tau_k}{k}<+\infty$. Therefore, by~\eqref{eq:inequalityedge},
		\begin{align*}
			\lim_{n} \frac{r(\tilde{N}_n^-)}{n} & \ge \lim_{k}  \frac{r(\tilde{N}_{\tau_k}^-)}{\tau_k} \\
			& \ge \lim_{k} \frac{r(N_{\tau_k}^-)}{\tau_k}+\frac{2k}{\tau_k}.
		\end{align*}
		
		By Proposition~\ref{prop:zerospeed},  
		\[\lim_{k} \frac{r(N_{\tau_k}^-)}{\tau_k}=\alpha(p_c(\varepsilon), \varepsilon)=0\]
		and thus, $\alpha(p_c(\varepsilon), \tilde{\varepsilon})>0$. Again by Proposition~\ref{prop:zerospeed}, $\alpha(p_c(\tilde{\varepsilon}), \tilde{\varepsilon})=0$. Hence, $p_c(\varepsilon)<p_c(\tilde{\varepsilon})$.
	\end{proof}
	
	\section{Stochastical domination by the enhanced process} \label{section:stochasticdomination}
	
	In this Section we prove Lemmas~\ref{lemma:domination} and~\ref{lemma:coupling}.
	\begin{proof}[Proof of Lemma~\ref{lemma:domination}.]
		We borrow the arguments of~\cite{AndjelRolla23}. Fix $n \ge 1$ and let $\Gamma$ be the rightmost open path from $2 \mathbb{Z}^- \times \{0\}$ to $\Lambda \cap (\mathbb{Z} \times \{n\})$. For any deterministic path $\gamma$ connecting $2 \mathbb{Z}^- \times \{0\}$ to $\Lambda \cap (\mathbb{Z} \times \{n\})$, the event $\{\Gamma =\gamma\}$ is determined only by the state of bonds on $\gamma$ and to the right of $\gamma$, i.e., on the set $D_{\gamma}:=\mathbb{Z}^2 \cap \{(x,k):k= 0,1,\dots, n, x \ge \gamma_k\}$. On the other hand, for every $(x,n) \in \Lambda$ with $x<\gamma_n$, the event $\{(x,n) \in N^-_n\}$ is determined only by the state of the edges on the set $E_{\gamma}:=\{(x,k): k =0,1,\dots, n, x<\gamma_k\}$. By independence of the edges, the conditional distribution of $N^-_n$ given $\{\Gamma=\gamma\}$ coincides with the trace on $\Lambda \cap (\mathbb{Z} \times \{n\})$ of the $\mathbb{P}_{p, \varepsilon}$ oriented percolation cluster of $C_{\gamma}:=(2 \mathbb{Z} \times \{0\}) \cup \{(\gamma_k,k), k=0,1,\dots, n\}$ in $E_{\gamma}$. Shifting everything by the space coordinate $\gamma_n$, i.e., substituting $C_{\gamma}$ by $C_{\tilde{\gamma}}$, with $\tilde{\gamma}_k=\gamma_k-\gamma_n$, $k=0,\dots,n$, we obtain the measure on the left-hand side of~\eqref{eq:domination}.
		
		Using the same arguments, the measure on the right-hand side of~\eqref{eq:domination}  is given by the analogous percolation cluster with $C_{\tilde{\gamma}}$ replaced by $C_{\bar{\gamma}}$, where $\bar{\gamma}_k=\tilde{\gamma_k}-2$ for $k=0,1,\dots n-1$ and $\bar{\gamma}_n={\tilde{\gamma}}_n=0$. The result follows from planarity of the graph.
	\end{proof}
	\begin{proof}[Proof of Lemma~\ref{lemma:coupling}.]
		As $F(A) \preccurlyeq F(B)$, enlarging the probability space if necessary, there are random configurations $A'$ and $B'$ such that $A'$ has the same distribution of $F(A)$, $B'$ has the same distribution of $F(B)$ and $A' \subseteq B'$. We then use the same open bonds to evolve the processes $(N_n^{A'})_{n \ge 0}$ and $(N_n^{B'}) _{n \ge 0}$. By attractiveness, $N_n^{A'} \subseteq N_n^{B'}$ for all $n \ge 0$. In particular, $r(N_n^{A'}) \le r(N_n^{B'})$ for all $n \ge 0$. Shifting  $(N_n^{A'})_{n \ge 0}$ by $r(A')$ and $(N_n^{B'}) _{n \ge 0}$ by $r(B')$ we obtain the desired coupling.
	\end{proof}

\section*{Acknowledgments}
The author is deeply grateful to Enrique Andjel for raising the problem and proposing an argument for its solution, as well as to Eulália Vares for her valuable discussions and careful reading of the manuscript.

\section*{Funding}
The author was supported by Conselho Nacional de Desenvolvimento Científico e Tecnológico (CNPq).

\section*{Acknowledgments}
The author is deeply grateful to Enrique Andjel for raising the problem and proposing an argument for its solution, as well as to Eulália Vares for her valuable discussions and careful reading of the manuscript.

\section*{Funding}
The author was supported by Conselho Nacional de Desenvolvimento Científico e Tecnológico (CNPq).
	
	\bibliographystyle{plainnat} 
	\bibliography{bibliography.bib}       
	
\end{document}